\documentclass{article}
\usepackage{graphicx,amsmath,amssymb,color,amsthm} 
\def\forb{{\mathrm{forb}}}
\def\ext{{\mathrm{Ext}}}
\def\Av{{\mathrm{Avoid}}}
\newcommand{\ncols}[1]{\| #1 \|}
\newtheorem{theorem}{Theorem}
\newtheorem{proposition}[theorem]{Proposition}

\newtheorem{defn}[theorem]{Definition}
\newtheorem{conj}[theorem]{Conjecture}

\title{Forbidding $r$-extension of $C_4$ -- a counterexample of the Anstee-Sali conjecture}
\author{Richard P. Anstee\\Department of Mathematics\\University of British Columbia\and  Attila Sali\\HUN-REN Alfréd Rényi Institute of Mathematics and\\Department of Computer Science, BUTE \and Pei Wu\\3dMed Diagnostic Inc. Data Lab}
\date{September 2026}

\begin{document}

\maketitle
\begin{abstract}
    In this note we prove $\mathrm{forb}(n,\mathbf{1}_{r-2}\times C_4)=\Theta(n^{r-1/2})$, thus providing a series of counterexamples of a 2005 conjecture of the first two authors. Here $\mathbf{1}_{r-2}\times C_4$ is the $r$-extension of the 4-cycle, that is the $r$-uniform hypergraph of four edges obtained by joining a set of $r-2$ vertices to each edge of $C_4$. $\mathrm{forb}(n,F)$ is the largest number of edges of a simple hypergraph $G$ on $n$ vertices without \emph{trace} (induced subhypergraph) $F$.
\end{abstract}
\section{Introduction}
An $n\times m$ matrix $A$ is said to be \emph{simple} if it is a (0,1)-matrix with no repeated columns. There is a natural correspondence between columns of $A$ and subsets of $[n]$. We use matrix terminology as follows. Let $\ncols{A}$ be the number of columns of $A$. For a given matrix $F$, we say $F$ is a \emph{configuration} in $A$, denoted $F\prec A$, if there is a submatrix of $A$ which is a row and column permutation of $F$. Note that in the language of set systems a configuration corresponds to a \emph{trace}, or induced subhypergraph. Define 
$$\Av(m,F)=\left\{A\,|\,A \hbox{ is }m\hbox{-rowed and simple}, F\not\prec A\right\},$$ 
$$\forb(m,F)=\max_{A\in\Av(m,F)}\ncols{A}.$$
A matrix $A\in\Av(m,F)$ is called {\emph{extremal}} if $\ncols{A}=\forb(m,F)$ and let 
$$\ext(m,F)=\{A\in\Av(m,F)\,|\,\ncols{A}=\forb(m,F)\}.$$ 
Many results are in a survey \cite{survey}.
A conjecture was formulated in \cite{anstee2005small} on the order of magnitude of $\forb(m,F)$. 
\begin{defn}\label{product}Let $A$ be an $m_1\times n_1$ simple matrix and let $B$ be an $m_2\times n_2$ simple matrix. Then $A\times B$ denotes the 
$(m_1+m_2)\times (n_1n_2)$ simple matrix each column consisting of a column of $A$ placed on a column of $B$ and this is done in all possible ways.\end{defn}

Let $A_i$ be an $n_i\times m_i$ simple matrix
for $1\le i\le t$. The $t$-fold product 
$A=A_1\times A_2\times \cdots \times A_t$ is an
$(\sum_{i=1}^t n_i)\times (\Pi_{i=1}^t m_i)$ simple matrix. Let $I_h$ denote the $h\times h$ identity matrix and $I_h^c$ denotes its (0,1)-complement. Let
$T_h$ denote the
$h\times h$ {\it triangular} matrix, that is 1's on or above the diagonal and 0's below.
Note that if each $A_i$ in the $t$-fold product above is of size $n/t\times n/t$ then the $t$-fold product has $n$ rows and $\Theta(n^t)$ columns. Since we are only interested in the sizes up to a constant factor, we may always assume that $n$ is divisible by the number of terms in the product.
Let $F$ be a $k\times \ell$ (0,1)-matrix.  

\begin{defn}\label{def:XF} Let $X(F)$ be the smallest $p$
so that $F$ is a configuration in
$A_1\times A_2\times \cdots \times A_p$ for every choice of $A_i$ as either
$I_{n/p}$, $I_{n/p}^{c}$ or $T_{n/p}$. Alternatively, assuming $F$ is not a configuration in at least one of $I$, $I^c$, $T$, then $X(F)-1$ is the largest choice of $p$ so that $F$ is not a configuration in
$A_1\times A_2\times \cdots \times A_p$ for some choice of $A_i$ as either
$I_{n/p}$, $I_{n/p}^{c}$ or $T_{n/p}$.\end{defn}
\begin{conj}\label{grand}\cite{anstee2005small} 
$$\forb(n,F)= \Theta(n^{X(F)-1}).$$\end{conj}
In the present paper we prove 
\begin{theorem}\label{thmrc4}
Let $\mathbf{1}_{k}$ denote the $k\times 1$ column of 1's and $C_4$ denote the $4\times 4$ incidence matrix of a four cycle. Then for $r\ge 4$ we have
\[\mathrm{forb}(n,\mathbf{1}_{r-2}\times C_4)=\Theta(n^{r-1/2}.)\]
    \end{theorem}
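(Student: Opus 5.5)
Both bounds come from passing between $n\times m$ simple matrices avoiding $F=\mathbf{1}_{r-2}\times C_4$ and graphs without $C_4$. The key fact is $\mathrm{ex}(n,C_4)=\Theta(n^{3/2})$.

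\textbf{Lower bound.} I will take a $C_4$-free graph $H$ on $n/2$ vertices with $\Theta(n^{3/2})$ edges, say a polarity graph. I will use $r-2$ further vertices chosen from a set $S$ of $n/2$ new rows. The matrix $A$ consists of all columns $e\cup R$ with $e\in E(H)$ and $R\in\binom{S}{r-2}$. It has $\Theta(n^{3/2}\cdot n^{r-2})=\Theta(n^{r-1/2})$ columns.

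Now suppose $F\prec A$, on rows $U$ (with $|U|=r+2$) and columns $c_1,\dots,c_4$. Each $c_i$ contains exactly $r$ rows of $U$ and hence misses exactly $2$. Each column of $A$ has exactly $2$ rows in $V(H)$. So $|c_i\cap U\cap V(H)|\le 2$, and $|c_i\cap U\cap S|\le r-2$.

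\begin{itemize}
\item If $|U\cap V(H)|\le 2$, all four traces agree on $U\cap V(H)$ and differ only inside $S$. Then $|U\cap S|\ge r$, and each $c_i$ takes $r-2$ rows there, so it must take all of $U\cap V(H)$. This forces $|U\cap S|=r$ and $|U\cap V(H)|=2$. The four distinct columns then contain the same edge, so the $C_4$ part must live inside $S$. I will check this against the structure of $F$: the four rows on which $F$ has its common ones must contain the pairs missing from the cycle. I will finish this case by counting.
\item If $|U\cap V(H)|\ge 3$, the $C_4$ rows must lie in $V(H)$ and the traces there are the four edges of a 4-cycle in $H$, a contradiction.
\end{itemize}

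I expect this case analysis to be a short but fiddly step. A cleaner fallback is to put every row in one vertex set, using $V(H)$ for the two cycle vertices and $S$ for the rest, and argue about which rows play the $C_4$ role.

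\textbf{Upper bound.} This is the main obstacle. I plan to argue by induction on $r$ with the standard decomposition. Delete row $1$ and write $A$ as $[B\,C\,C\,D]$ with respect to that row. Then $\|A\|\le\forb(n-1,F)+\|C\|$. It suffices to show $\|C\|=O(n^{r-3/2})$. The idea is that $C$ must avoid something like $\mathbf{1}_{r-3}\times C_4$, and a base case gives the right exponent.

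However, the base $C_4$ (the case $r=2$) has $\forb=\Theta(n^2)$, not $n^{3/2}$, because traces allow all small sets. So this route does not work directly, and I will instead do a shadow or degree argument.

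By the standard bound $\forb(n,F)=O(n^{k})$ for $k$-rowed $F$ (with $k=r+2$), columns of size above $r+1$ can be handled by further restrictions. The main case is to bound $r$-sets and show these contribute $O(n^{r-1/2})$. For each $(r-2)$-set $R$, the link graph $G_R=\{e: R\cup e\in A\}$ must be $C_4$-free: a 4-cycle in $G_R$ gives $F$ on the rows $R\cup V(C)$ directly, since those columns are exactly $R\cup e$. Summing over $R$ gives $\binom{r}{2}\|A_r\|\le\binom{n}{r-2}O(n^{3/2})$, and hence $O(n^{r-1/2})$ columns of size $r$.

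For columns of size $s>r$, I would use the fact that traces on $(r+2)$-sets can realize $F$ via supersets. I would first try a Kleitman--style down-shifting, or a theorem of Frankl on traces, to reduce $A$ to a down-closed family without creating $F$. Down-closed families avoiding $F$ have only $O(n^{r+1})$ possible $(r+1)$-sets. I must then show that the sizes above $r$ are dominated by size-$r$ sets, again via the link-$C_4$ condition applied to shadows: each $(r+1)$-set forces its $r$-subsets in, and those $r$-sets already number $O(n^{r-1/2})$, so by Kruskal--Katona there are $O(n^{r-1/2})$ larger sets. Making this shifting step preserve $F$-freeness is the crux, and it is where I would spend the effort.
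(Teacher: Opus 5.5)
\textbf{Lower bound.} Your construction $A=\{e\cup R: e\in E(H),\ R\in\binom{S}{r-2}\}$ contains $F=\mathbf{1}_{r-2}\times C_4$, so the lower bound fails.

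Take an edge $uv$ of $H$, a set $Q'\subseteq S$ with $|Q'|=r-4$, and four more points $a_1,a_2,b_1,b_2\in S$. The four columns $\{u,v\}\cup Q'\cup\{a_i,b_j\}$ all lie in $A$. On the rows $\{u,v\}\cup Q'\cup\{a_1,a_2,b_1,b_2\}$ they form exactly $F$. This is the situation your first case reduces to (``the $C_4$ part must live inside $S$''), and no counting can exclude it.

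Your second case fails as well. Take a vertex $v$ of $H$ with two neighbors $a_1,a_2$, a set $Q'\subseteq S$ with $|Q'|=r-3$, and $b_1,b_2\in S\setminus Q'$. This gives $F$ with the $C_4$ rows split between $V(H)$ and $S$.

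For an $r$-uniform family, $F\prec A$ means precisely that the link graph of some $(r-2)$-set contains a $4$-cycle. So you need $\Theta(n^{r-1/2})$ $r$-sets in which \emph{every} $(r-2)$-set, not only those inside $S$, has a $C_4$-free link. A product of a $C_4$-free graph with a complete $(r-2)$-graph cannot achieve this. The paper gets the lower bound from Mubayi's theorem $\mathrm{ex}_r(n,K^{(r)}(1,\dots,1,2,2))=\Theta(n^{r-1/2})$, whose extremal constructions are algebraic.

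\textbf{Upper bound.} Your link-graph bound for columns of size exactly $r$ is correct. The treatment of larger columns, however, has a genuine gap.

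Down-shifts never create shattered sets, but they can create a configuration like $F$ whose columns are not down-closed. Write $Qa_1b_2$ for $Q\cup\{a_1,b_2\}$, with $|Q|=r-2$. The family $\{Qa_1b_1b_2,\,Qa_2b_1b_2,\,Qa_1b_2,\,Qa_1,\,Qa_2b_2,\,Qa_2\}$ has only two $r$-sets, so it avoids $F$. Its down-shift at $b_2$, however, contains $Qa_1b_1,Qa_1b_2,Qa_2b_1,Qa_2b_2$, i.e.\ $F$.

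Even for down-closed families, Kruskal--Katona only gives $O(n^{(r-1/2)(r+1)/r})$ sets of size $r+1$. The right argument there is different: each $(r+1)$-set gives a triangle in the link of each of its $(r-2)$-subsets, and an edge of a $C_4$-free graph lies in at most one triangle.

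You also discarded standard induction too early. The point is to keep the \emph{whole family} of inductive children, not just the single child $\mathbf{1}_{r-3}\times C_4$. Two levels below $\mathbf{1}_2\times C_4$, this family contains both of the following:
\begin{itemize}
\item $I_2\times I_2=C_4$, obtained by deleting the two rows of $1$'s;
\item $T_2\times T_2$, obtained by deleting two adjacent cycle rows, which leaves two rows of $1$'s over all four $(0,1)$-columns of height $2$.
\end{itemize}
By Anstee et al., $\forb(n,\{I_2\times I_2,T_2\times T_2\})=\Theta(n^{3/2})$. Two applications of $\forb(n,\mathcal F)\le\forb(n-1,\mathcal F)+\forb(n-1,\mathcal F')$ then give $O(n^{7/2})$ for $r=4$. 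Since $\mathbf{1}_{r-3}\times C_4$ is an inductive child of $\mathbf{1}_{r-2}\times C_4$, induction on $r$ gives $O(n^{r-1/2})$ for all $r\ge 4$. This handles columns of all sizes at once, with no shifting.
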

The lower bound in Theorem~\ref{thmrc4} was found using AI \cite{wu2026counterexample}.
\begin{proposition}\label{prop:XF}
    We have that for $r\ge 4$ \[X(\mathbf{1}_{r-2}\times C_4)=r.\]
\end{proposition}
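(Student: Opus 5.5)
We prove the two inequalities $X(\mathbf{1}_{r-2}\times C_4)>r-1$ and $X(\mathbf{1}_{r-2}\times C_4)\le r$ separately, using the factor-by-factor structure of the product. Write $F=\mathbf{1}_{r-2}\times C_4$. It is an $(r+2)\times 4$ matrix: $r-2$ rows of all $1$'s, followed by the rows of $C_4$. We label the $C_4$ rows $x_1,x_2,y_1,y_2$ so that its four columns are indexed by pairs $(i,j)\in\{1,2\}^2$, column $(i,j)$ having $1$'s exactly in rows $x_i$ and $y_j$. In particular every column of $F$ has exactly $r$ ones.

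\textbf{Lower bound $X(F)\ge r$.} We exhibit one choice of $r-1$ factors that avoids $F$. Take $A_1=\dots=A_{r-1}=I_{n/(r-1)}$. Every column of $I\times\cdots\times I$ has exactly $r-1$ ones. Any submatrix of it therefore has column sums at most $r-1<r$, so $F\not\prec A_1\times\cdots\times A_{r-1}$. Hence $X(F)-1\ge r-1$.

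\textbf{Upper bound $X(F)\le r$.} Let $A_1,\dots,A_r$ each be one of $I,I^c,T$ of size $n/r$, with $n$ large. A column of the product is a tuple $(c_1,\dots,c_r)$ with $c_t$ a column of $A_t$, and these choices are independent across factors. Thus it suffices to choose a few rows in each factor and, for each of the four columns $(i,j)$ of $F$, a column in each factor realizing the required restricted pattern.

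We use the following local gadgets.
\begin{itemize}
\item[(a)] Any factor can provide a single row that is $1$ on all chosen columns. It can also provide a single row taking prescribed values $0$ or $1$, since every row of $I$, $I^c$, $T$ contains both a $0$ and a $1$ once the size is at least $2$.
\item[(b)] $I$ and $I^c$ can provide two rows $a,b$ with patterns $(1,0)$ and $(0,1)$. This hosts the pair $x_1,x_2$ (or $y_1,y_2$), where column $(i,j)$ needs $x_i=1$ and $x_{3-i}=0$.
\item[(c)] $T$ and $I^c$ can provide a row equal to $1$ together with a second row taking both values. For $T$, take rows $i<j$ and columns $c\ge i$: row $i$ is always $1$, and row $j$ is $0$ or $1$ according as $c<j$ or $c\ge j$. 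For $I^c$, take rows $a,b$ and columns $b$ and $d\notin\{a,b\}$.
\end{itemize}
Gadgets (b) and (c) each use two rows of one factor, and we need $r+2$ rows from $r$ factors.

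Let $k$ be the number of factors equal to $I$; we split into cases.
\begin{itemize}
\item If $k\ge2$, two copies of $I$ host $\{x_1,x_2\}$ and $\{y_1,y_2\}$ via (b). The other $r-2$ factors each give a ones row via (a).
\item If $k=1$, the $I$ factor hosts $\{x_1,x_2\}$. Two further factors use (c) to give one ones row each, plus $y_1$ and $y_2$ respectively. Since $y_1$ and $y_2$ lie in different factors, column $(i,j)$ can set $y_j=1$ and $y_{3-j}=0$ independently. The remaining $r-3$ factors give ones rows, for a total of at least $r-2$ ones rows.
\item If $k=0$, four factors use (c) to host $x_1,x_2,y_1,y_2$, each together with a ones row. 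The remaining $r-4$ factors give further ones rows, for $r$ ones rows in total, of which we keep $r-2$. This is exactly the case that needs $r\ge4$.
\end{itemize}
In every case, deleting surplus rows leaves a submatrix that is a row permutation of $F$. The four product columns are distinct because their $C_4$ parts differ. Hence $F\prec A_1\times\cdots\times A_r$, and $X(F)\le r$.

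The main obstacle is the $T$ factors. $T$ cannot host a complementary pair of $C_4$ rows, because its columns restricted to two rows form a chain. The case analysis above is designed to route around this, using gadget (c) to pair each $C_4$ row placed in $T$ with a ones row. Checking (c) for $T$ and $I^c$ carefully, including that $n/r$ is large enough for the needed row and column indices to exist, is the step to verify in detail.
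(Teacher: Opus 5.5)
Your proof is correct. The lower bound is the paper's argument: a product of $r-1$ identities has all column sums $r-1<r$. For the upper bound you use a different decomposition.

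The paper has only two cases:
\begin{itemize}
\item If every factor is $I$, it uses $\mathbf{1}_{r-2}\times I_2\times I_2$, as in your case $k\ge 2$.
\item Otherwise it lets a \emph{single} factor $I^c$ or $T$ absorb the entire constant block. Here $r-1$ rows and two columns of that factor form $\mathbf{1}_{r-2}\times\begin{bmatrix}0&1\end{bmatrix}$. One row $\begin{bmatrix}0&1\end{bmatrix}$ from each of three further factors, of any type, then gives $\mathbf{1}_{r-2}\times\begin{bmatrix}0&1\end{bmatrix}\times\begin{bmatrix}0&1\end{bmatrix}\times\begin{bmatrix}0&1\end{bmatrix}\times\begin{bmatrix}0&1\end{bmatrix}$. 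Its last four rows contain every $(0,1)$-column of length $4$, hence $C_4$.
\end{itemize}
Since the remaining $C_4$ rows are left completely free, the paper never needs a complementary pair inside $T$. That is exactly the obstruction your split by $k$ is built to route around.

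The paper's route is shorter and shows a little more. As soon as one factor is $I^c$ or $T$, four factors already suffice, so the value $r$ is forced only by the all-identity product.

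Your route has advantages of its own. It uses only two-row gadgets with at most one constant row per factor, so factors of bounded size work: size $3$, versus about $r$ for the paper's gadget, which is harmless here since $n$ is large. It also makes explicit why $T$ is the delicate factor. The cost is the extra cases $k=1$ and $k=0$.

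Both arguments extend verbatim to more than $r$ factors, which the ``largest $p$'' form of the definition formally requires.
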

\begin{proof}
    Since each column of $\mathbf{1}_{r-2}\times C_4$ contains $r$ 1's, it is not contained as a configuration in the product of at most $r-1$ copies of the identity matrix, thus $X(\mathbf{1}_{r-2}\times C_4)\ge r$. 

    On the other hand, consider the product $A_1\times A_2\times \cdots \times A_p$ and assume that one of the $A_i$'s is $I^c$ or $T$. Then one may choose $r-1$ rows  and  two columns of $A_i$ such that they form a matrix $\begin{bmatrix}
        1&1\\1&1\\\vdots&\vdots\\1&1\\0&1
    \end{bmatrix}$. Then we can choose $\begin{bmatrix}
        0&1
    \end{bmatrix}$ from three of the other terms thus obtaining 
    \[\mathbf{1}_{r-2}\times C_4=\mathbf{1}_{r-2}\times \begin{bmatrix}
        0&1
    \end{bmatrix}\times \begin{bmatrix}
        0&1
    \end{bmatrix}\times \begin{bmatrix}
        0&1
    \end{bmatrix}\times \begin{bmatrix}
        0&1
    \end{bmatrix}\prec A_1\times A_2\times \cdots \times A_p.\]
    On the other hand, if each term $A_i$ is an identity matrix, then we may pick  one row from $r-2$ terms with a 1 in it and then two $I_2$'s from the remaining two identity terms. This gives that $\mathbf{1}_{r-2}\times C_4=\mathbf{1}_{r-2}\times I_2\times I_2\prec  A_1\times A_2\times \cdots \times A_p$.
\end{proof}
Thus, Theorem~\ref{thmrc4} and Proposition~\ref{prop:XF} provide a series of counterexamples for Conjecture~\ref{grand}.
\section{Proof of the theorem}
To prove the lower bound we invoke a theorem of Mubayi \cite{MUBAYI_2002}. Let $\mathcal{H}$ be an $r$-uniform hypergraph $ex_r(n,\mathcal{H)}$ denote the largest possible number of edges of a simple $r$-uniform hypergraph $\mathcal{G}$ that does not have a subhypergraph $\mathcal{H}$.
\begin{theorem}\label{thm:Mubayi}\cite{MUBAYI_2002}
   Let $r, t$ be fixed positive integers, $r \ge 2$. Let $\mathcal{H}$ be the complete $r$-partite
$r$-graph $K^{(r)}(1,\ldots , 1, 2, t + 1)$ consisting of $r - 2$ sets of size 1, one set of size 2, and one set
of size $t + 1$. Then
\[ex_r(n, \mathcal{H)}\sim
\frac{\sqrt{t}}{r!}n^{r-1/2}
.\] 
\end{theorem}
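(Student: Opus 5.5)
The plan for Theorem~\ref{thm:Mubayi} is to prove the upper bound by reducing to links of $(r-2)$-sets, where the problem becomes the bipartite Turán problem for $K_{2,t+1}$. The lower bound needs an algebraic construction whose links are near-extremal $K_{2,t+1}$-free graphs.

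\emph{Upper bound.} Let $\mathcal{G}$ be an $r$-uniform $\mathcal{H}$-free hypergraph on $n$ vertices. For each $(r-2)$-set $S$ of vertices, let $G_S$ be the link graph on $[n]\setminus S$: its edges are the pairs $\{x,y\}$ with $S\cup\{x,y\}\in\mathcal{G}$. Suppose some $G_S$ contained $K_{2,t+1}$ with parts $\{a,b\}$ and $\{c_1,\dots,c_{t+1}\}$. Then $S$, $\{a,b\}$, $\{c_i\}$ would give a copy of $K^{(r)}(1,\dots,1,2,t+1)$ in $\mathcal{G}$, which is excluded. So every $G_S$ is $K_{2,t+1}$-free, and by the K\H{o}v\'ari--S\'os--Tur\'an bound with F\"uredi's sharp constant,
\[ |E(G_S)|\le (1+o(1))\tfrac12\sqrt t\, n^{3/2}. \]
Each edge of $\mathcal{G}$ contains $\binom{r}{2}$ sets $S$ of size $r-2$, so it is counted $\binom r2$ times when we sum over $S$. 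Hence
\[ |\mathcal{G}|\le \binom{n}{r-2}\frac{(1+o(1))\sqrt t\,n^{3/2}}{2\binom r2}=(1+o(1))\frac{\sqrt t}{r!}n^{r-1/2}. \]

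\emph{Lower bound.} Here I would build a hypergraph in which every link is essentially a copy of F\"uredi's extremal $K_{2,t+1}$-free graph. The plan is to use a sum-type construction:
\begin{enumerate}
\item Fix an abelian group $\Gamma$ of order about $n$, for instance built from $\mathbb{F}_q^2$ modulo a multiplicative subgroup of order $t$ as in F\"uredi's construction, together with a set $P\subseteq\Gamma$.
\item Choose $P$ so that for every translate $P-\sigma$, the graph on $\Gamma$ with $x\sim y$ iff $x+y\in P-\sigma$ is $K_{2,t+1}$-free and has about $\tfrac12\sqrt t\,n^{3/2}$ edges.
\item Declare an $r$-set $\{x_1,\dots,x_r\}$ to be an edge iff $x_1+\dots+x_r\in P$.
\end{enumerate}
For a fixed $(r-2)$-set $S$ with sum $\sigma$, the link is exactly the sum graph for $P-\sigma$, so it contains no $K_{2,t+1}$. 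Any copy of $\mathcal{H}$ would produce a $K_{2,t+1}$ in the link of its $r-2$ singleton vertices, so the hypergraph is $\mathcal{H}$-free.

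Counting edges then gives $\binom{n}{r-2}\cdot\tfrac12\sqrt t\,n^{3/2}/\binom r2$ up to lower-order terms. The error terms come from collisions, namely repeated elements in $x_1,\dots,x_r$ and loops $x+x\in P$, and these contribute $O(n^{r-1})$.

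\emph{Main obstacle.} The hard step is the lower bound, specifically choosing $\Gamma$ and $P$ so that every translated sum graph stays $K_{2,t+1}$-free with the sharp constant $\tfrac12\sqrt t$. Translation invariance is automatic for the common-neighbourhood count: if $x+y_i$ and $x'+y_i$ both lie in $P-\sigma$, then $x-x'$ is a difference of two elements of $P$. So it suffices to require that each nonzero difference $d=x-x'$ is represented by at most $t$ pairs in $P$. I would try to take $P$ to be a norm-type set in $\mathbb{F}_{q^2}$ quotiented by the subgroup of order $t$, for which this property holds. Then $|P|\approx\sqrt{t n}$, and the edge count of each link comes out to the required asymptotics.
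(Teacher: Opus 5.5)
The paper gives no proof of this statement; it only cites Mubayi. So I judge your argument on its own.

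\textbf{What is correct.} Your upper bound is right and standard. Each $(r-2)$-link is $K_{2,t+1}$-free. The K\H{o}v\'ari--S\'os--Tur\'an count $\sum_v\binom{d(v)}{2}\le t\binom{n}{2}$ already gives $(\frac12+o(1))\sqrt t\,n^{3/2}$ edges per link. Averaging over the $\binom r2$ subsets of size $r-2$ of each edge then gives $(1+o(1))\frac{\sqrt t}{r!}n^{r-1/2}$. Your reduction of the lower bound is also sound. If every nonzero element of $\Gamma$ is a difference of at most $t$ pairs from $P$, then a copy of $\mathcal H$ whose singletons sum to $\sigma$ would write $a-b$ as the difference of the $t+1$ distinct pairs $(\sigma+a+c_i,\sigma+b+c_i)$. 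The edge count is $(1-O(1/n))\,n^{r-1}|P|/r!$.

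\textbf{The gap.} The whole lower bound rests on a set $P$ with $|P|=(1-o(1))\sqrt{tn}$ in a group of order $(1-o(1))n$. You do not construct it, and the candidate you name does not work. In the multiplicative group $\Gamma=\mathbb{F}_{q^2}^*/H$, any set defined by a condition on the norm $N\colon\mathbb{F}_{q^2}^*\to\mathbb{F}_q^*$ is a union of cosets of the subgroup $(\ker N)H/H$, which has order at least $(q+1)/2$. So every nonidentity element of that subgroup is a ratio of at least $(q+1)/2$ pairs from $P$. In the additive group of $\mathbb{F}_{q^2}$, the set $\{z: N(z)\in H\}$ has $t(q+1)\approx t\sqrt n$ elements. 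The links would then have about $\frac t2 n^{3/2}$ edges, more than any $K_{2,t+1}$-free graph allows once $t\ge 2$. For $t=1$, the unit circle meets some of its translates in two points, which produces $C_4$'s.

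\textbf{A fix.} What works is a trace-type set, namely Bose's Sidon set. Take $q\equiv 1\pmod t$, $H\le\mathbb{F}_q^*$ of order $t$, $\theta\in\mathbb{F}_{q^2}\setminus\mathbb{F}_q$, and $\Gamma=\mathbb{F}_{q^2}^*/H$, of order $(q^2-1)/t$. Let $P$ be the image of the affine line $\theta+\mathbb{F}_q$. This line maps injectively, so $|P|=q$ and $|P|^2=t|\Gamma|+1$. Comparing coordinates in the basis $\{1,\theta\}$, the equation $(\theta+a)(\theta+b')=(\theta+a')(\theta+b)$ forces $\{a,b'\}=\{a',b\}$. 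Hence each $c\ne 1$ in $\mathbb{F}_{q^2}^*$ equals $u/v$ for at most one pair $u,v\in\theta+\mathbb{F}_q$. A nonidentity element of $\Gamma$ lifts to $t$ elements, none equal to $1$, so it is a ratio of at most $t$ pairs from $P$.

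Concretely, $\{x_1H,\dots,x_rH\}$ is an edge iff the $\theta$-coordinate of $x_1\cdots x_r$ lies in $H$, and each link is a F\"uredi-type graph. You also need the asymptotic for every $n$, not just $n=(q^2-1)/t$. For primes $q\equiv 1\pmod t$, consecutive admissible orders have ratio tending to $1$ by the prime number theorem for progressions. So take the largest admissible order at most $n$ and add isolated vertices.
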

Observe that the incidence matrix of $K^{(r)}(1,\ldots , 1, 2, t + 1)$ in case of $t=1$ is exactly $\mathbf{1}_{r-2}\times C_4$, so Theorem~\ref{thm:Mubayi} gives an $r$-uniform hypergraph  on $n$ vertices whose incidence matrix $A$ satisfies $\ncols{A}=\Omega(n^{r-1/2})$ and $\mathbf{1}_{r-2}\times C_4\not\prec A$.

In order to prove the upper bound we apply Standard Induction.
The argument is described in \cite{survey} but we give it here. Let  $\mathcal{F}$ be a family of matrices. We would like to find a bound on $\forb(n,\mathcal{F})$. Choose some $A \in \Av(n,\mathcal{F})$ such that $\|A\| = \forb(n,\mathcal{F})$.

For some row $r$, we can reorder the rows and columns of $A$ to obtain the \emph{standard decomposition} of $A$ on row $r$:

\begin{equation}
A = 
\begin{matrix}
     r\rightarrow \\ \\
\end{matrix}
\begin{bmatrix}
    0 & \cdots & 0 & 1 & \cdots & 1\\
    B_r & & C_r & C_r & & D_r\\
\end{bmatrix},
\label{rdecomp}\end{equation}
If $r$ were deleted, the matrix may not be simple. $C_r$ is the simple matrix of the repeated columns, the columns which appear once with a $0$ in row $r$ and once with a $1$ in row $r$. $B_r$ is the simple matrix of the columns that appear with a $0$ in row $r$, but don't appear with a $1$, and $D_r$ consists of the columns that appear with a $1$ but not a $0$. We note $[B_rC_rD_r]$ is a simple $(m-1)$-rowed matrix avoiding ${\cal F}$, so $\|B_rC_rD_r\| \leq \forb(m-1,\mathcal{F})$. Since $\forb(m,{\cal F}) = \|A\| = \|B_rC_rD_r\| + \|C_r\|$, we obtain:
    \begin{equation}\forb(n,{\cal F}) \leq \forb(n-1,{\cal F}) + \|C_r\|.\label{eq:induction}\end{equation}
If we can find an upper bound on $\ncols{C_r}$ (as a function of $n$), then we have an upper bound on $\forb(n,{\cal F})$ by induction. 

Let $F\in{\cal F}$ and take the standard decomposition of $F$ on some row $t$:
 $$F = 
\begin{matrix}
     t\rightarrow \\ \\
\end{matrix}
\begin{bmatrix}
    0 & \cdots & 0 & 1 & \cdots & 1\\
    B'_t & & C'_t & C'_t & & D'_t\\
\end{bmatrix}.$$ 
For every row $t$, we define $F'_t= [B_t'C_t'D_t']$. If $C_r$ contains $F'_t$, then $A$ contains $F$. We conclude that $C_r \in \Av(n-1,\mathcal{F}')$, where $\mathcal{F}'$ is the set of $F_t'$ obtained for every row $t$ in every matrix $F\in\mathcal{F}$. We call $\mathcal{F}'$ the \emph{inductive children} of $\mathcal{F}$.

We may formalize the induction bound using (\ref{eq:induction} as follows. 
\begin{proposition}\label{prop:ind}
    Let $\mathcal{F}$ be a family of forbidden configurations and let $\mathcal{F}'$ be the inductive children of $\mathcal{F}$. Suppose that $\forb(n,{\cal F}')=O(n^{\alpha})$ Then
    \begin{equation}\label{eq:ind-child}
        \forb(n,{\cal F})=O(n^{\alpha+1}).
    \end{equation}
\end{proposition}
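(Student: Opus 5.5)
We argue by induction on $n$, using the standard decomposition (\ref{rdecomp}) together with inequality (\ref{eq:induction}). Fix a constant $c$ with $\forb(n,\mathcal{F}')\le c\,n^{\alpha}$ for all $n\ge 1$. Such a $c$ exists because the hypothesis gives the bound for $n\ge n_0$, and for the finitely many $n<n_0$ we have $\forb(n,\mathcal{F}')\le 2^n$, which is a constant.

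Let $A\in\ext(n,\mathcal{F})$ and decompose $A$ on an arbitrary row $r$ as in (\ref{rdecomp}). The first key step is to confirm that $C_r\in\Av(n-1,\mathcal{F}')$, which is the observation made just before the proposition.

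Suppose $F'_t\prec C_r$ for some $F\in\mathcal{F}$ and some row $t$ of $F$. Take the rows and columns of $C_r$ realizing $F'_t$. Add row $r$ of $A$, and for each column of $C'_t$ inside this copy take both copies of the corresponding column of $C_r$ in $A$, one with a $0$ and one with a $1$ in row $r$. For the columns of $B'_t$, take the copy of the corresponding column of $C_r$ with a $0$ in row $r$. For the columns of $D'_t$, take the copy with a $1$ in row $r$. These columns are distinct, and row $r$ plays the role of row $t$, so we obtain $F\prec A$, a contradiction.

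This also covers the degenerate case where $F$ has a single row, because then $F'_t$ is empty of rows but $C_r$ still contains it as long as $C_r$ has enough columns. That case only affects constants. Hence $\ncols{C_r}\le \forb(n-1,\mathcal{F}')\le c\,(n-1)^{\alpha}\le c\,n^{\alpha}$.

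Plugging this into (\ref{eq:induction}) gives $\forb(n,\mathcal{F})\le \forb(n-1,\mathcal{F})+c\,n^{\alpha}$. Iterating down to $n=1$ yields
\[\forb(n,\mathcal{F})\le \forb(1,\mathcal{F})+c\sum_{i=2}^{n} i^{\alpha}\le 2+c\,n^{\alpha+1},\]
which is $O(n^{\alpha+1})$. Here $\alpha\ge 0$ may be assumed, since $\forb$ is at least $1$ whenever $\mathcal{F}$ is nonempty and nontrivial.

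The only step needing care is the embedding argument in the second paragraph. In particular, we must check that the chosen columns of $A$ are distinct and that row $r$ correctly reproduces row $t$ of $F$ under the row and column permutation. Everything else is a routine summation.
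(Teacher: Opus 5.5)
Your proof is correct and follows the paper's route. The paper's argument is the standard-induction discussion preceding the proposition: (\ref{eq:induction}) combined with $C_r\in\Av(n-1,\mathcal{F}')$, which you check explicitly by lifting a copy of $F'_t$ back to a copy of $F$ in $A$, followed by summing over $n$. One minor point: the assumption $\alpha\ge 0$ is really forced by $\forb(n,\mathcal{F}')\ge 1$, which can fail only when $\mathcal{F}$ contains a trivial one-rowed configuration.
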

Proposition~\ref{prop:ind} applies to $\mathbf{1}_{r-2}\times C_4$, since $\mathbf{1}_{(r-1)-2}\times C_4$ is one of the inductive children of $\mathcal{F}=\{\mathbf{1}_{r-2}\times C_4\}$. To deal with the base case of the induction we prove
\begin{proposition}\label{prop:r=4}
Let $F_4=\left[\begin{array}{cccc}
1&1&1&1\\ 1&1&1&1\\ 1&0&0&1\\ 1&1&0&0\\ 0&1&1&0\\ 0&0&1&1\\ \end{array}\right]$. Then
    \[
    \forb(n,F_4)=O(n^{\frac{7}{2}}).
    \]
\end{proposition}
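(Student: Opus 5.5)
The approach is to reduce Proposition~\ref{prop:r=4} to a bound for $C_4$ itself by conditioning on a pair of rows. Note that $F_4=\mathbf{1}_2\times C_4$. Take $A\in\Av(n,F_4)$. For each pair of rows $\{a,b\}$, let $A_{ab}$ be the matrix formed by the columns of $A$ with $1$'s in both rows $a$ and $b$, with rows $a,b$ deleted.

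\emph{Step 1: $A_{ab}$ is simple and avoids $C_4$.} Two distinct columns of $A$ that agree on rows $a,b$ must differ elsewhere, so $A_{ab}$ is a simple $(n-2)$-rowed matrix. If $C_4\prec A_{ab}$, then adding back rows $a$ and $b$ (all $1$'s on these columns) gives $\mathbf{1}_2\times C_4=F_4\prec A$, a contradiction. Hence $A_{ab}\in\Av(n-2,C_4)$ and $\ncols{A_{ab}}\le \forb(n,C_4)$.

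\emph{Step 2: double counting.} Every column of $A$ with at least two $1$'s appears in $A_{ab}$ for at least one pair $\{a,b\}$. At most $n+1$ columns have fewer than two $1$'s. Therefore
\[\ncols{A}\le n+1+\sum_{\{a,b\}}\ncols{A_{ab}}\le n+1+\binom n2\forb(n,C_4).\]
So it suffices to show $\forb(n,C_4)=O(n^{3/2})$, which yields $O(n^{7/2})$.

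\emph{Step 3 (the main obstacle): $\forb(n,C_4)=O(n^{3/2})$.} The lower bound $\Omega(n^{3/2})$ comes from the incidence matrices of $C_4$-free graphs, so this is the right target; I would first check whether it is recorded in the survey \cite{survey}. Plain standard induction (Proposition~\ref{prop:ind}) is too weak here. Deleting any row of $C_4$ leaves a $3\times4$ simple child, and bounding its forb by $O(n)$ only gives $O(n^2)$.

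Instead I would split the columns of $B\in\Av(n,C_4)$ by their number of $1$'s.
\begin{itemize}
\item Columns with exactly two $1$'s form a graph. A $4$-cycle in that graph is a $C_4$ configuration, so there are $O(n^{3/2})$ such columns by the K\H{o}v\'ari--S\'os--Tur\'an bound.
\item For columns with at least three $1$'s, I would assign to each column a pair of rows inside its support, chosen injectively or with bounded multiplicity, so that the chosen pairs form a $C_4$-free graph. Concretely, I would argue that if two columns $u,v$ both contain rows $x,y$, and two others $w,z$ both contain rows $x',y'$, etc., then one can extract a $4\times4$ cycle trace on suitable rows. This uses the freedom to choose which rows represent each column, picking rows where the other columns have $0$'s.
\end{itemize}
If a clean injection fails, the fallback is a weighted count. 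For each column $c$ with support $S_c$, count the pairs inside $S_c$ that are not contained in the support of any other column of a restricted type, and show that these pairs give a $C_4$-free graph. The delicate point throughout is controlling the entries outside the chosen $4$ rows, since a configuration only requires the exact $0/1$ pattern on the chosen rows. I expect this choice of ``private'' row pairs to be the main difficulty of the whole proof.
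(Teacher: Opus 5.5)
Your Steps 1 and 2 are correct, but the target of Step 3 is false, and no bound on the individual $A_{ab}$ can rescue the reduction. Configurations allow columns of arbitrary weight, so $C_4$-free graphs are not the extremal objects. Take $A=T_{n/3}\times T_{n/3}\times T_{n/3}$. For rows $i<j$ of $T_m$, no column has $0$ in row $i$ and $1$ in row $j$. On the other hand, every pair of rows of $C_4$ exhibits both $(1,0)$ and $(0,1)$ on its four columns. Any copy of $C_4$ would have two of its four rows in the same factor, so $C_4\not\prec A$. Thus $\forb(n,C_4)\ge (n/3)^3$, and in fact $\forb(n,C_4)=\Theta(n^3)$, since the upper bound holds for every $4$-rowed configuration by the Sauer bound.

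Consequences for your plan:
\begin{itemize}
\item Step 2 gives only $O(n^5)$, which is the trivial Sauer bound for a $6$-rowed configuration.
\item The ``private pair'' scheme for columns of weight at least $3$ cannot exist: a bounded-multiplicity assignment onto a $C_4$-free graph would force $O(n^{3/2})$ columns.
\item The double counting itself is too lossy. This same $A$ avoids $F_4$, because rows $3$--$6$ of $F_4$ form $C_4$. Its column indexed by $(k_1,k_2,k_3)$ has weight $k_1+k_2+k_3$, so $\sum_{\{a,b\}}\ncols{A_{ab}}=\sum_c\binom{|c|}{2}=\Theta(n^5)$ for an $F_4$-avoiding matrix.
\end{itemize}

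The missing idea is a mechanism that excludes product constructions such as $T\times T\times T$. Conditioning on the two rows of $1$'s discards exactly that information. Indeed, $F_4\prec A$ if and only if some $A_{ab}$ contains $C_4$, so each $A_{ab}$ is individually only known to be $C_4$-free, and you then pay a factor $\binom n2$ for the pairs.

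The paper instead applies standard induction (Proposition~\ref{prop:ind}) to $F_4$ itself, twice, paying only one factor of $n$ per step. The family $\mathcal{F}''$ of grandchildren contains not only $I_2\times I_2$ (which is $C_4$) but also $T_2\times T_2$. The latter arises by deleting two suitable rows of the $C_4$ part of $F_4$ while keeping both rows of $1$'s. Forbidding $T_2\times T_2$ as well rules out $T\times T\times T$. The result of Anstee et al.\ that $\forb(n,\{I_2\times I_2,T_2\times T_2\})=\Theta(n^{3/2})$ then yields $O(n^{5/2})$ for $\mathcal{F}'$ and $O(n^{7/2})$ for $F_4$.

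You dismissed standard induction because you ran it on $C_4$ alone. There it cannot beat $n^3$, since that is the true order. The gain comes from running it on $F_4$, whose children retain the rows of $1$'s.
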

\begin{proof}
    Observe that inductive children $\mathcal{F}'$ of $\mathcal{F}=\{F_4\}$ are 
\[
\mathcal{F}'=\{\left[\begin{array}{cccc}
1&1&1&1\\ 1&1&1&1\\ 1&0&0&1\\ 1&1&0&0\\ 0&1&1&0\\  \end{array}\right],
\left[\begin{array}{cccc}
1&1&1&1\\  1&0&0&1\\ 1&1&0&0\\ 0&1&1&0\\ 0&0&1&1\\ \end{array}\right]\}.
\]
    and the inductive children $\mathcal{F}''$ of $\mathcal{F}'$ are
    \[
    \mathcal{F}''=\{\left[\begin{array}{cccc}
  1&0&0&1\\ 1&1&0&0\\ 0&1&1&0\\ 0&0&1&1\\ \end{array}\right],
\left[\begin{array}{cccc}
1&1&1&1\\  1&0&0&1\\ 1&1&0&0\\ 0&1&1&0\\ \end{array}\right],
\left[\begin{array}{cccc}
1&1&1&1\\ 1&1&1&1\\  1&0&0&1\\ 1&1&0&0\\  \end{array}\right],
\left[\begin{array}{cc}
1&1\\ 1&1\\  1&0\\ 0&1\\  \end{array}\right]\}.
    \]
    Now, $\{I_2\times I_2,T_2\times T_2\}\subset\mathcal{F}''$ (the first and third matrices above), thus 
    \[
    \forb(n, \mathcal{F}'')\le\forb(n, \{I_2\times I_2,T_2\times T_2\}).
    \]
    This latter one was studied in a paper of Anstee et.al. \cite{anstee2014forbidden}, where it was shown that 
    \begin{equation}\label{eq:i2t2}
        \forb(n, \{I_2\times I_2,T_2\times T_2\})=\Theta(n^{\frac{3}{2}}).
    \end{equation}
   Applying Proposition~\ref{prop:ind} to (\ref{eq:i2t2}) twice, the proof of Proposition~\ref{prop:r=4} is complete.
\end{proof}
\begin{proof}[Proof of Theorem~\ref{thmrc4}]
    The lower bound is given by Theorem~\ref{thm:Mubayi}. The upper bound follows by induction on $r$. The base case $r=4$ is Proposition~\ref{prop:r=4}, while the induction step is Proposition~\ref{prop:ind}.
\end{proof}
\section{Remark}
	Motivated by the example and argument above, we propose the following conjecture.
	\begin{defn}
		Let $S$ be a simple matrix and $F$ be a $(0,1)-$matrix, define
		$$\Av(S,F)=\left\{A\,|\,A \hbox{ is obtained from } S \hbox{ by deleting zero or more columns}, F\not\prec A\right\},$$
		$$\forb(S,F)=\max_{A\in\Av(S,F)}\ncols{A}.$$
	\end{defn}
        
	\begin{conj}
		The exists $a,b,c\in \mathbb{Z}^{\ge 0}$, s.t. let $p=a+b+c$, 
		$$U^{a,b,c}_{n/p}=\underbrace{I_{n/p}\times\dots\times I_{n/p}}_\text{a}\times\underbrace{I^c_{n/p}\times\dots\times I^c_{n/p}}_\text{b}\times\underbrace{T_{n/p}\times\dots\times T_{n/p}}_\text{c},$$
		we have
		$$\forb(n,F)=\Theta(\forb(U^{a,b,c}_{n/p},F)).$$
	\end{conj}
\section{AI-assistant disclosure}
	
	The author of \cite{wu2026counterexample} used GPT-5.6 Sol to help identify the example and to assist with
	the exposition. The author independently reviewed and revised the outputs,
	verified the mathematical claims, and takes full responsibility for the final
	manuscript.

\end{document}